\numberwithin{equation}{section}
\theoremstyle{plain}
\newtheorem{thm}{Theorem}
\newtheorem{prop}[thm]{Proposition}
\theoremstyle{definition}
\newtheorem{defi}[thm]{Definition}
\newtheorem{rem}[thm]{Remark}
\newcommand{\C}{\mathbb{C}}
\newcommand{\R}{\mathbb{R}}
\newcommand{\D}{\mathbf{D}}
\newcommand{\mst}{\mathrm{m}}
\newcommand{\bst}{\mathrm{b}}
\newcommand{\im}{{\rm Im}}
\begin{document}
\title{Unimodality of Boolean and monotone stable distributions}
\author{Takahiro Hasebe\thanks{Supported by Marie Curie International Incoming Fellowships PIIF-GA-2012-328112.} \\ Laboratoire de Math\'{e}matiques, 
Universit\'{e} de Franche-Comt\'{e}\\
16 route de Gray
25030 Besan\c{c}on cedex, 
France\\
thasebe@univ-fcomte.fr
\and Noriyoshi Sakuma\thanks{Supported by JSPS KAKENHI Grant Number 24740057.} \\Department of Mathematics, Aichi University of Education\\
1 Hirosawa, Igaya-cho, Kariya-shi, 448-8542, Japan\\
sakuma@auecc.aichi-edu.ac.jp}
\date{\today}
\maketitle
\abstract{We give a complete list of the Lebesgue-Jordan decomposition of Boolean and monotone stable distributions and a complete list of the mode of them. They are not always unimodal.}
\section{Introduction} 
Boolean and monotone stable distributions were defined in \cite{SW97,H10} in the context of non-commutative probability theory with Boolean and monotone independence, respectively \cite{SW97,M01}. 
The aspect of domains of attraction for these distributions are studied in \cite{W12,AW13} and \cite{BP99}, respectively. 

They also play important roles in free probability theory: Positive monotone stable laws are the marginal laws of a free L\'evy process of second kind \cite[Theorem 4.5, Corollary 4.5]{B98};  A compound free Poisson distribution having a monotone stable law as its free L\'evy measure has  explicit Cauchy and Voiculescu transforms \cite{AH13}; A positive Boolean stable distribution is the law of the quotient of two i.i.d.\ classical stable random variables, and at the same time, it is the law of the quotient of i.i.d.\ free stable random variables (in the free sense) \cite{BP99,AH}.

In this paper, we first determine the absolutely continuous part and also the singular part of the monotone and Boolean stable laws. While part of this computation is known in the literature \cite{AH13,AH,H10}, there is no complete list of the formulas.  

Second, we investigate the mode of these measures. It is known that classical and free stable distributions are unimodal~\cite{Y78, BP99}. 
More generally, selfdecomposable and free selfdecomposable distributions, which respectively include all stable and free stable distributions, are unimodal \cite{Y78,HT}. 
However, monotone stable laws and Boolean stable laws include the arcsine law and the Bernoulli law respectively, and we cannot expect unimodality for all. We obtain the mode of all monotone and Boolean stable distributions. 
 
\section{Unimodality of Boolean and monotone stable distributions}
First, we gather analytic tools and their properties to compute Boolean and monotone stable distributions. 
\subsection{Analytic tools}
Let $\mathcal{P}$ denote the set of all Borel probability measures 
on $\mathbb{R}$. 
In the following, we explain main tools of free probability. 
Let $\C^+ : = \{ z \in \C : \mathrm{Im} (z) > 0 \}$ and $\C^- : = \{ z \in \C : \mathrm{Im} (z) < 0 \}$.
For $\mu \in \mathcal{P}$, 
the Cauchy transform $G_{\mu}:\mathbb{C}^{+}\rightarrow\mathbb{C}
^{-}$ is defined by
\[
G_{\mu}(z)=\int_{\mathbb{R}}\frac{1}{z-x}\mu(\mathrm{d}x),\quad z\in
\mathbb{C}^{+}.
\]
and the reciprocal Cauchy transform $F_{\mu}:\C^{+}\to\C^{+}$of $\mu \in \mathcal{P}$ is defined by
\[
F_{\mu}(z)=\frac{1}{G_{\mu}(z)},\quad z \in \mathbb{C}^+.
\]
In this paper, we apply the Stieltjes inversion formula \cite{A65,T00} for Boolean and monotone stable distributions. For any Borel probability measure $\mu$, we can recover the distribution from its Cauchy transform: if $\mu$ does not have atoms at $a,b$, we have
\begin{align*}
\mu([a,b]) = -\frac{1}{\pi} \lim_{y\searrow 0}\im \int_{[a,b]} G_{\mu}(x+iy) dx.
\end{align*}
Especially, if $G_\mu(z)$ extends to a continuous function on $\C^+ \cup I$ for an open interval $I \subset \R$, then the distribution $\mu$ has continuous derivative $f_{\mu}=d\mu/dx$ with respect to the Lebesgue measure $dx$ on $I$, and  
we obtain $f_{\mu}(x)$ by
\begin{align*}
f_{\mu}(x)=-\frac{1}{\pi} \lim_{y\searrow 0}\im\, G_{\mu}(x+iy),\quad x\in I.
\end{align*} 
Atoms of $\mu$ may be computed by the formula
\begin{align*}
\mu(\{ a\}) = \lim_{z\to a, \,z\in\C^{+}}(z-a)G_{\mu}(z) = \lim_{y\searrow 0}y\, G_{\mu}(a+iy), \quad a\in\R. 
\end{align*}

\subsection{Boolean case} 
In this paper, the maps $z\mapsto z^p$ and $z\mapsto\log z$ always denote the principal values for $z\in\C\setminus (-\infty,0]$.  
Correspondingly $\arg (z)$ is defined in $\C\setminus (-\infty,0]$ so that it takes values in $(-\pi,\pi)$. 

\begin{defi} Let $\bst_{\alpha,\rho}$ be a boolean stable law~\cite{SW97} characterized by the following. 
\begin{enumerate}[{\rm(1)}]
\item If $\alpha \in(0,1)$, then 
$$
F_{\bst_{\alpha,\rho}}(z)=z+e^{i\rho \alpha\pi}z^{1-\alpha},  \qquad z\in \C^+,\qquad \rho \in [0,1].  
$$
\item If $\alpha =1$, then 
$$
F_{\bst_{\alpha,\rho}}(z)=z+2\rho i -\frac{2(2\rho-1)}{\pi}\log z, \qquad z\in \C^+,\qquad \rho \in[0,1].  
$$
\item If $\alpha\in(1,2]$, then 
$$
F_{\bst_{\alpha,\rho}}(z)=z+ e^{i[(\alpha-2)\rho+1]\pi} z^{1-\alpha},\qquad z\in\C^+,\qquad \rho \in [0,1]. 
$$
\end{enumerate}
\end{defi}
\begin{rem}
The case $\alpha=1$ includes non strictly stable distributions which were considered in \cite{AH}.  
\end{rem}

In the case $\alpha\in(0,1)\cup(1,2]$, for simplicity we also use a parameter $\theta$, instead of $\rho$, defined by  
\begin{equation}\label{theta}
\theta=
\begin{cases}
\rho\alpha\pi\in[0,\alpha\pi],& \alpha\in(0,1),\\
[(\alpha-2)\rho+1]\pi\in[(\alpha-1)\pi,\pi],&\alpha\in(1,2]. 
\end{cases}
\end{equation}

The probability measure $\bst_{\alpha,\rho}$ is described as follows. Let 
\begin{align}
&B_\alpha(x, \theta)=
\frac{\sin \theta}{\pi}\frac{x^{\alpha-1}}{x^{2\alpha} +2 x^\alpha \cos \theta + 1},& x>0,&\qquad \alpha\in(0,1)\cup (1,2), \\
&B_1(x, \rho)=
\frac{2\rho}{\pi} \frac{1}{(x-\frac{2(2\rho-1)}{\pi}\log x)^2+4\rho^2},& x>0. 
\end{align}

\begin{prop}\label{boolean stable law}
The Boolean stable distributions are as follows. 
\begin{enumerate}[{\rm(1)}]
\item\label{b1} If $\alpha\in(0,1)$ and $\rho\in(0,1)$, then 
\[
\bst_{\alpha,\rho}(dx)=B_\alpha(x,\rho\alpha\pi)1_{(0,\infty)}(x)\,dx + B_\alpha(-x,(1-\rho)\alpha\pi)1_{(-\infty,0)}(x)\,dx. 
\]
\item\label{b2} If $\alpha \in(0,1)$ and $\rho=1$, then 
\[
\bst_{\alpha,1}(dx) = B_\alpha(x,\alpha\pi)1_{(0,\infty)}(x)\,dx. 
\]



\item\label{b4} If $\alpha=1$ and $\rho\in(0,1)$, then 
\[
\begin{split}
\bst_{1,\rho}(dx)
&=B_1(x,\rho)1_{(0,\infty)}(x)\,dx +B_1(-x,1-\rho)1_{(-\infty,0)}(x)\,dx. 
\end{split}
\]
The case $\rho=\frac{1}{2}$ is the Cauchy distribution 
$$
\bst_{1,1/2}(dx)=\frac{1}{\pi(x^2+1)}1_{\R}(x)\,dx.  
$$

\item\label{b5} If $\alpha=1$ and $\rho=1$, then 
$$
\bst_{1,1}(dx)=B_1(x,1)1_{(0,\infty)}(x)\,dx+  \frac{u_+(0)}{u_+(0)+2/\pi}\delta_{-u_+(0)}, 
$$
where $u_+(0)=0.4745\dots$ is the unique solution $u$ of the equation $\pi u +2\log u=0$, $u\in(0,\infty)$.


\item\label{b6} If $\alpha\in(1,2)$ and $\rho\in(0,1)$, then
\[
\bst_{\alpha,\rho}(dx)=B_\alpha(x,[\rho\alpha -2\rho+1]\pi)1_{(0,\infty)}(x)\,dx + B_\alpha(-x,[(1-\rho)\alpha+2\rho-1]\pi)1_{(-\infty,0)}(x)\,dx. 
\]

\item\label{b7} If $\alpha\in(1,2)$ and $\rho=1$, then $\bst_{\alpha,\rho}$ has an atom at $-1$:   
\[
\bst_{\alpha,1}(dx) = 
B_\alpha(x,(\alpha-1)\pi)1_{(0,\infty)}(x)\,dx +\frac{1}{\alpha}\delta_{-1}. 
\]

\item\label{b8} If $\alpha=2$ and $\rho\in[0,1]$, then 
\[
\bst_{2,\rho}=\frac{1}{2}(\delta_{-1}+\delta_1). 
\]

\end{enumerate}
For each $\alpha\in(0,2)$, the replacement $\rho\mapsto 1-\rho$ gives the reflection of the measure regarding $0$. 

\end{prop}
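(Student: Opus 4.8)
The plan is to read everything off the reciprocal Cauchy transforms in the definition of $\bst_{\alpha,\rho}$, since $G_{\bst_{\alpha,\rho}}=1/F_{\bst_{\alpha,\rho}}$ and the inversion formulas recalled above then deliver both the density and the atoms. For the absolutely continuous part I would use
\[
f(x)=-\frac{1}{\pi}\im G_{\bst_{\alpha,\rho}}(x+i0)=\frac{1}{\pi}\frac{\im F_{\bst_{\alpha,\rho}}(x+i0)}{|F_{\bst_{\alpha,\rho}}(x+i0)|^2},
\]
so the whole computation reduces to taking boundary values of $F$ and separating real and imaginary parts.

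First I would treat $x>0$ for $\alpha\in(0,1)\cup(1,2)$. There $z^{1-\alpha}\to x^{1-\alpha}>0$ as $z=x+iy\searrow x$, so with $\theta$ the exponent from the definition one gets $\im F(x+i0)=x^{1-\alpha}\sin\theta$ and $|F(x+i0)|^2=x^2+2x^{2-\alpha}\cos\theta+x^{2-2\alpha}=x^{2-2\alpha}(x^{2\alpha}+2x^\alpha\cos\theta+1)$; substituting and cancelling powers of $x$ gives exactly $B_\alpha(x,\theta)$. The case $x<0$ is the only delicate point: as $z\searrow x<0$ one has $\arg z\to\pi$, hence $z^{1-\alpha}\to|x|^{1-\alpha}e^{i(1-\alpha)\pi}$, a phase shift by $(1-\alpha)\pi$. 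Writing $\psi=\theta+(1-\alpha)\pi$, the reduction identities $\sin\psi=\sin((1-\rho)\alpha\pi)$ and $\cos\psi=-\cos((1-\rho)\alpha\pi)$ (and their analogues for $\alpha\in(1,2)$) are precisely what turns the density on $(-\infty,0)$ into $B_\alpha(-x,(1-\rho)\alpha\pi)$. This settles the absolutely continuous statements (1), (2) and (6); statement (4) is identical in spirit, with $\log z\to\log|x|+i\pi$ replacing the phase shift.

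Next I would find the atoms via $\bst_{\alpha,\rho}(\{a\})=\lim_{y\searrow 0}iy/F_{\bst_{\alpha,\rho}}(a+iy)$, which is nonzero exactly at the real points where the boundary value of $F$ vanishes, the mass there being $1/F'(a)$. For statement (7) ($\alpha\in(1,2)$, $\rho=1$, so $\theta=(\alpha-1)\pi$) one has $F(-1+i0)=-1+e^{i(\alpha-1)\pi}e^{i(1-\alpha)\pi}=0$ and $F'(-1)=1+(1-\alpha)e^{i(\alpha-1)\pi}e^{-i\alpha\pi}=\alpha$, giving $\frac{1}{\alpha}\delta_{-1}$; for statement (5) ($\alpha=1$, $\rho=1$) the equation $F(-u+i0)=-u-\frac{2}{\pi}\log u=0$ is exactly $\pi u+2\log u=0$, while $F'(-u)=1+\frac{2}{\pi u}$ yields the mass $\frac{u}{u+2/\pi}$. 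In the remaining cases the boundary value of $F$ never vanishes on $\R\setminus\{0\}$ (either $\im F>0$, because the relevant angle lies in $(0,\pi)$, or $\im F=0$ but $\re F\neq 0$), so no atoms occur and the absolutely continuous part already carries total mass one. Statement (8) is immediate: $\alpha=2$ forces $F(z)=z-z^{-1}$, whence $G(z)=\frac{z}{z^2-1}=\frac12\big(\frac{1}{z-1}+\frac{1}{z+1}\big)$, the transform of $\frac12(\delta_{-1}+\delta_1)$, independent of $\rho$.

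Finally, the reflection symmetry is cleanest at the level of transforms: if $\tilde\mu$ is the pushforward of $\mu$ under $x\mapsto-x$, then $F_{\tilde\mu}(z)=-\overline{F_\mu(-\bar z)}$, and inserting the definition together with the branch identity $(-z)^{1-\alpha}=e^{-i(1-\alpha)\pi}z^{1-\alpha}$ (for $z\in\C^+$) sends the parameter $\rho$ to $1-\rho$; the same check works for $\alpha=1$ using $\log(-z)=\log z-i\pi$. I expect the only genuine obstacle to be bookkeeping: keeping the principal branch of $z^{1-\alpha}$ and $\log z$ straight across the negative real axis and carrying out the trigonometric reductions so that both halves of the density fit the single template $B_\alpha$. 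Once the phase shift by $(1-\alpha)\pi$ is handled correctly, every case follows by direct substitution.
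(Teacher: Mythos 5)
Your treatment of the absolutely continuous parts and the atoms is essentially the paper's own proof: both arguments take boundary values of $F$ on $(0,\infty)$ and on $(-\infty,0)$ (with the phase shift $z^{1-\alpha}\to(-x)^{1-\alpha}e^{i(1-\alpha)\pi}$, resp.\ $\log z\to\log(-x)+i\pi$), apply the Stieltjes inversion, and locate atoms at the real zeros of the boundary value of $F$. Your derivative computations match the paper's limits: $F'(-1)=\alpha$ corresponds to the paper's $\lim_{z\to-1}(z+1)G(z)=1/\alpha$, and $F'(-u_+)=1+\frac{2}{\pi u_+}$ is exactly the reciprocal of the series coefficient $a=\frac{u_+(0)+2/\pi}{u_+(0)}$ in the paper. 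One place where you genuinely depart from the paper, and to your advantage, is the reflection property: you prove it at the level of transforms via $F_{\tilde\mu}(z)=-\overline{F_\mu(-\bar z)}$ together with the branch identity $(-z)^{1-\alpha}=e^{-i(1-\alpha)\pi}z^{1-\alpha}$ for $z\in\C^+$, which covers the endpoint values $\rho=0,1$ directly; the paper instead reads reflection off the density formulas for $\rho\in(0,1)$ and then has to invoke weak continuity of $\bst_{\alpha,\rho}$ in $\rho$ to reach the endpoints. Your route is cleaner and eliminates that approximation step.

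There is, however, one unjustified step. You exclude atoms only on $\R\setminus\{0\}$ and then assert that ``the absolutely continuous part already carries total mass one'' --- but you never prove that, and $x=0$ is a genuine candidate by your own criterion: for $\alpha\in(0,1)$ the boundary value of $F$ \emph{does} vanish at $0$ (since $F(z)=z+e^{i\theta}z^{1-\alpha}\to0$), and for $\alpha=1$ the term $\log z$ is singular there. The paper checks this point explicitly in every case by verifying $\lim_{y\searrow0}yG(iy)=0$; for instance, for $\alpha\in(0,1)$ one has $yG(iy)\sim y^{\alpha}\,e^{-i\theta}i^{\alpha-1}\to0$, and for $\alpha\in(1,2)$ even $G(iy)\to0$. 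This is a one-line fix, but without it (or without actually integrating $B_\alpha$ to show total mass one) your identification of the listed measures with $\bst_{\alpha,\rho}$ is incomplete: mass could a priori sit at the origin. Relatedly, it would be worth one sentence noting why no singular continuous part can occur --- $G$ extends continuously to $\C^+\cup(\R\setminus S)$ for a finite set $S$, so any singular mass must be atomic and supported in $S$ --- since your argument, like the paper's, silently relies on this.
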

\begin{proof} 
The case $\alpha=2$ is well known and we omit it. 

(i)\,\, The case $\alpha\neq1$. 
We write $G(z),F(z)$ instead of $G_{\bst_{\alpha,\rho}}(z), F_{\bst_{\alpha,\rho}}(z)$ respectively. 
If $x>0$, then $\lim_{y\searrow0}(x+iy)^{1-\alpha}$ is simply $x^{1-\alpha}$, while if $x<0$, then the argument of $x+iy$ approaches to $\pi$, and so $\lim_{y\searrow0}(x+iy)^{1-\alpha} = (-x)^{1-\alpha}e^{i\pi(1-\alpha)}=(-x)^{1-\alpha}e^{i\pi(1-\alpha)}$. 
So we have for $x>0$ that 
\[
\begin{split}
\lim_{y\searrow0}G(x+iy) 
&= \frac{1}{x +e^{i\theta}x^{1-\alpha}} = \frac{x^{\alpha-1}}{x^\alpha +e^{i\theta}} = \frac{x^{\alpha-1}(x^\alpha+\cos\theta-i\sin \theta)}{(x^\alpha+\cos \theta)^2+\sin^2\theta} 
\end{split}
\]
and for $x<0$
\[
\begin{split}
\lim_{y\searrow0}G(x+iy) 
&= \frac{1}{x -e^{i(\theta-\alpha\pi)}(-x)^{1-\alpha}} = -\frac{(-x)^{\alpha-1}}{(-x)^\alpha +e^{i(\theta-\alpha\pi)}} \\
&= -\frac{(-x)^{\alpha-1}(x+\cos(\theta-\alpha\pi)-i\sin(\theta-\alpha\pi))}{((-x)^\alpha+\cos (\theta-\alpha\pi))^2+\sin^2(\theta-\alpha\pi)}. 
\end{split}
\]
Taking the imaginary part of these expressions, we obtain
\begin{equation}\label{boolean density}
-\frac{1}{\pi}\lim_{y\searrow0}\im\,G(x+iy) 
=
\begin{cases}
\displaystyle\frac{\sin \theta}{\pi}\frac{x^{\alpha-1}}{x^{2\alpha} +2 x^\alpha \cos \theta + 1},& x>0, \\[12pt]
\displaystyle\frac{\sin (\alpha\pi-\theta)}{\pi}\frac{(-x)^{\alpha-1}}{(-x)^{2\alpha} +2 (-x)^\alpha \cos(\alpha\pi-\theta) + 1},&x<0. 
\end{cases}
\end{equation}

(i-1)\,\, The case $\alpha\in(0,1)$.
If $\alpha\in(0,1)$ and $\rho\in[0,1]$, the function $F$ extends continuously to $\C^+\cup \R$ and does not have a zero except at $z=0$ in $\C^+\cup \R$, but since $\lim_{y\searrow0}yG(iy)=0$, there is no atom at $0$. 
The above argument shows (\ref{b1}) and (\ref{b2}).  

(i-2)\,\, The case $\alpha\in(1,2)$. 
First note that the measure does not have an atom at $0$ since $\lim_{y\searrow0}G(iy)=0$. If $\theta\in((\alpha-1)\pi,\pi)$, then the same computation (\ref{boolean density}) is valid and so we obtain (\ref{b6}).  
If $\theta =(\alpha-1)\pi$ (or equivalently  $\rho=1$), then (\ref{boolean density}) is valid for $x>0$. Now note that $z\mapsto F(z)=z+e^{i(\alpha-1)\pi}z^{1-\alpha}=z+(-z)^{1-\alpha}$ has a zero $z=-1$ in $(-\infty,0)$. Hence $G$ extends to a continuous function on $\C^+\cup \R \setminus\{-1\}$, and we have $\im\,G(x+i0)=0$ for $x<0$, $x\neq-1$. There is an atom at $-1$ with weight $1/\alpha$ since 
$$
\lim_{z\to-1, z\in\C^+}(z+1)G(z)=\lim_{z\searrow-1, z\in\C^+}\frac{1}{z}\cdot\frac{1-(-z)}{1-(-z)^{-\alpha}}=\frac{1}{\alpha}. 
$$
This implies (\ref{b7}). 

(ii)\,\, The case $\alpha=1$.  We can easily see that $\lim_{y\searrow0}y G(iy)=0$ and so there is no atom at 0. Assume first that $\rho\in(0,1)$. For $x>0$, we have that 
\begin{equation}\label{rho(0,1)}
\begin{split}
\lim_{y\searrow0}\im\,G(x+iy) 
&= \lim_{y\searrow0}\im\,\frac{1}{(x+iy)+2\rho i -\frac{2(2\rho-1)}{\pi}\log (x+iy)} \\
&= \im\,\frac{1}{x -\frac{2(2\rho-1)}{\pi}\log x +2\rho i} \\
&= \im\,\frac{x -\frac{2(2\rho-1)}{\pi}\log x -2\rho i}{(x -\frac{2(2\rho-1)}{\pi}\log x)^2 +4\rho^2} \\
&= \frac{-2\rho}{(x -\frac{2(2\rho-1)}{\pi}\log x)^2 +4\rho^2} 
\end{split}
\end{equation}
and so we get $-\frac{1}{\pi}\lim_{y\searrow0}\im\,G(x+iy)=B_1(x,\rho)$. For $x<0$, note that $\log(x+i0) = \log (-x) +i\pi$ and then similarly to (\ref{rho(0,1)}) we get 
$-\frac{1}{\pi}\lim_{y\searrow0}\im\,G(x+iy)=B_1(-x,1-\rho)$. Since $G$ extends to a continuous function on $\C^+ \cup \R \setminus\{0\}$, we get the formula (\ref{b4}) by the Stieltjes inversion.

If $\rho=1$, then the computation for $x>0$ is the same as (\ref{rho(0,1)}). For $x<0$, note that 
$$
F(x+i0)= x+2i -\frac{2}{\pi}(\log(-x)+i\pi) = -((-x) +\frac{2}{\pi}\log(-x)),  
$$
which has the unique zero at $x=-u_+(0)$. So $G$ extends to a continuous function on $\C^+ \cup \R \setminus \{0,-u_+(0)\}$ and $\im\, G(x+i0)=0$ for $x<0, x \neq -u_+(0)$. 
We have the series expansion 
$$
x -\frac{2}{\pi}\log(-x)= a(x+u_+(0)) + b(x+u_+(0))^2 + \cdots. 
$$
Then $a=\frac{d}{dx}\Big|_{x=-u_+(0)}(x -\frac{2}{\pi}\log(-x)))= \frac{u_+(0)+2/\pi}{u_+(0)}$. The weight of the atom at $-u_+(0)$ is equal to $1/a$, and so we have (\ref{b5}). 

(iv)\,\, The final statement is proved as follows. For $\alpha\in(0,2)$ and $\rho\in(0,1)$, the measure $\bst_{\alpha,\rho}$ is the reflection of $\bst_{\alpha,1-\rho}$ regarding $x=0$ by the formulas (\ref{b1}), (\ref{b4}) and (\ref{b6}). For fixed $\alpha\in(0,2)$, the measure $\bst_{\alpha,\rho}$ depends on $\rho$ weakly continuously. Hence, by approximation, the reflection property is true for any $\rho\in[0,1]$. 
\end{proof}

\begin{thm} 
Let $\alpha_0=0.7364\ldots$ be the unique solution of the equation $\sin(\pi \alpha)=\alpha$, $\alpha\in(0,1)$. Let 
\begin{align}
&x_+:= \left(\frac{-\cos\theta+ \sqrt{\alpha^2-\sin^2\theta}}{1+\alpha}\right)^{1/\alpha},  \\
&x_-:= -\left(\frac{-\cos(\alpha\pi-\theta)+ \sqrt{\alpha^2-\sin^2(\alpha\pi-\theta)}}{1+\alpha}\right)^{1/\alpha}. 
\end{align}
Let $u_+=u_+(\rho)$ be the unique solution $x$ of the equation $\pi x +2(1-2\rho)\log x=0$, $x\in(0,\infty)$, $\rho\in[0,\frac{1}{2})$, and let 
 $u_-=u_-(\rho):=-u_+(1-\rho)$, $\rho\in(\frac{1}{2},1]$. 
\begin{enumerate}[\rm(1)]
\item\label{bu1} If $\alpha \in (0,\alpha_0]$, then $\bst_{\alpha,\rho}$ is unimodal with mode 0. 

\item\label{bu2} If $\alpha \in (\alpha_0,1)$, then there are sub cases. 
\begin{enumerate}[\rm (a)] 
\item\label{bu2b} If $\theta\in[0,\arcsin(\alpha)-(1-\alpha)\pi)$, then $\bst_{\alpha,\rho}$ is bimodal with modes 0 and $x_+$. 
\item\label{bu2a} If $\theta\in[\arcsin(\alpha)-(1-\alpha)\pi, \pi -\arcsin(\alpha)]$, then $\bst_{\alpha,\rho}$ is unimodal with mode 0.  

\item\label{bu2c} If $\theta\in(\pi -\arcsin(\alpha),\alpha\pi]$, then $\bst_{\alpha,\rho}$ is bimodal with modes $x_-$ and $0$. 
\end{enumerate}

\item\label{bu3} If $\alpha =1$, then there are sub cases. 
\begin{enumerate}[\rm (a)] 

\item\label{bu3b}  If $\rho\in[0,\frac{1}{2})$, then $\bst_{\alpha,\rho}$ is bimodal with modes $-\frac{2(1-2\rho)}{\pi}$ and $u_+$. If $\rho=0$, then the mode at $u_+$ is an atom.  
\item\label{bu3a} If $\rho=\frac{1}{2}$, then $\bst_{\alpha,\rho}$ is unimodal with mode 0.  

\item\label{bu3c}  If $\rho\in(\frac{1}{2},1]$, then $\bst_{\alpha,\rho}$ is bimodal with modes $u_-$ and $\frac{2(2\rho-1)}{\pi}$. If $\rho=1$, then the mode at $u_-$ is an atom.  
\end{enumerate}

\item\label{bu4}  If $\alpha\in(1,2]$, then $\bst_{\alpha,\rho}$ is bimodal with modes $x_-$ and $x_+$. If $\theta=(\alpha-1)\pi, \pi$, then $\bst_{\alpha,\rho}$ has an atom at $x_-=-1,x_+=1$, respectively.

\end{enumerate}
\end{thm}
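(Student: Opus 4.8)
The plan is to work directly with the explicit densities furnished by Proposition~\ref{boolean stable law}, reducing the whole statement to an elementary one–variable study of the functions $B_\alpha(\cdot,\theta)$ and $B_1(\cdot,\rho)$. On each half-line $(0,\infty)$ and $(-\infty,0)$ the density of $\bst_{\alpha,\rho}$ is a single (possibly reflected) copy of $B_\alpha$, and the two pieces meet only at $x=0$, so the modes can be located by analysing each half-line separately together with the boundary behaviour at the origin. First I would record the endpoint asymptotics: since $B_\alpha(x,\theta)=\frac{\sin\theta}{\pi}\,x^{\alpha-1}/(x^{2\alpha}+2x^\alpha\cos\theta+1)$ behaves like $x^{\alpha-1}$ near $0$ and like $x^{-\alpha-1}$ near $\infty$, the density blows up at $0$ when $\alpha\in(0,1)$, stays finite when $\alpha=1$, and vanishes at $0$ when $\alpha\in(1,2)$, while always tending to $0$ at $\infty$. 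This already fixes the qualitative picture: for $\alpha<1$ the origin is always a mode (an infinite peak), whereas for $\alpha>1$ it is a valley, so each half-line must then contribute exactly one interior maximum.

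The central computation is the location of the interior critical points. Taking the logarithmic derivative of $B_\alpha(x,\theta)$ and clearing denominators, the equation $\partial_x B_\alpha(x,\theta)=0$ becomes, after the substitution $y=x^\alpha$, the quadratic $(1+\alpha)y^2+2\cos\theta\,y+(1-\alpha)=0$, whose roots are $y=\bigl(-\cos\theta\pm\sqrt{\alpha^2-\sin^2\theta}\bigr)/(1+\alpha)$; the root with the $+$ sign is precisely $x_+^{\alpha}$. I would then read off case by case when a positive root exists and whether it is a maximum, using that the discriminant is nonnegative iff $\sin\theta\le\alpha$, the product of the roots has the sign of $1-\alpha$, and their sum has the sign of $-\cos\theta$. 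For $\alpha\in(1,2)$ one has $\alpha^2-\sin^2\theta\ge\alpha^2-1>0$ automatically and exactly one root is positive, giving a unique interior maximum $x_+$ on $(0,\infty)$ and, by the same argument on the reflected piece, a unique $x_-$ on $(-\infty,0)$; hence bimodality always holds, which is statement (4), the endpoint atoms at $\theta\in\{(\alpha-1)\pi,\pi\}$ being already supplied by Proposition~\ref{boolean stable law}. For $\alpha\in(0,1)$ both roots are positive only when $\cos\theta<0$ and real only when $\sin\theta\le\alpha$; combining these with $\theta\in(\pi/2,\alpha\pi]$ shows that an interior maximum appears on the positive half-line exactly when $\theta>\pi-\arcsin\alpha$, and performing the identical analysis on the negative piece (parameter $\alpha\pi-\theta$) yields the threshold $\theta<\arcsin\alpha-(1-\alpha)\pi$. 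These two thresholds split $[0,\alpha\pi]$ into the three subranges of (2), and the range allowing an interior bump is nonempty iff $\arcsin\alpha>(1-\alpha)\pi$, i.e.\ iff $\alpha>\alpha_0$ since $\sin((1-\alpha)\pi)=\sin(\alpha\pi)$; this is precisely how the constant $\alpha_0$ and statement (1) enter.

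The case $\alpha=1$ must be handled separately because $B_1(x,\rho)=\frac{2\rho}{\pi}\bigl[(x-\tfrac{2(2\rho-1)}{\pi}\log x)^2+4\rho^2\bigr]^{-1}$ has a logarithmic rather than power form. Here I would locate maxima by finding where the denominator is smallest: on the positive half-line the function $h(x)=x-\frac{2(2\rho-1)}{\pi}\log x$ has a zero $u_+$, the solution of $\pi x+2(1-2\rho)\log x=0$, forcing a peak there, while on the negative half-line the analogous function keeps a constant sign and is minimised at a single point, producing the second mode at $-\frac{2(1-2\rho)}{\pi}$; the symmetric value $\rho=\tfrac12$ collapses to the Cauchy law, and $\rho>\tfrac12$ follows from the reflection symmetry $\rho\mapsto1-\rho$ recorded in Proposition~\ref{boolean stable law}. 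The atoms arising at $\rho\in\{0,1\}$ are again read off from that proposition, and one simply notes that the atom replaces the continuous mode on the corresponding side.

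I expect the main obstacle to be the bookkeeping in the second paragraph: converting the purely algebraic conditions ``a positive root exists and is a local maximum'' into the clean trigonometric thresholds $\arcsin\alpha-(1-\alpha)\pi$ and $\pi-\arcsin\alpha$, and establishing \emph{global} rather than merely local unimodality. The former requires care with the branch of $\arcsin$ for $\theta>\pi/2$ and with the degenerate double-root cases at the interval endpoints, where the incipient bump is only an inflection (so the closed middle interval in (2) still yields a single mode). The latter amounts to ruling out spurious extra oscillation: since the quadratic has at most two roots, each half-line density has at most two critical points, and a sign-of-derivative argument near $0$ and $\infty$ pins down which is a minimum and which a maximum, confirming that the counted peaks are the only modes.
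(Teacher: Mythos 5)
Your proposal is correct and follows essentially the same route as the paper: differentiate the explicit densities of Proposition \ref{boolean stable law}, reduce $\partial_x B_\alpha(x,\theta)=0$ to the quadratic $(1+\alpha)y^2+2y\cos\theta+(1-\alpha)=0$ in $y=x^\alpha$ (exactly the paper's factor $f$ appearing in \eqref{derivative boolean density}), extract the thresholds $\pi-\arcsin\alpha$ and $\arcsin\alpha-(1-\alpha)\pi$ and the constant $\alpha_0$ from the conditions $\cos\theta<0$ and $\sin\theta<\alpha$, and handle $\alpha=1$ separately via the two factors in \eqref{derivative boolean density 2} --- your denominator-minimisation argument identifies the same two critical points $u_+$ and $\frac{2(2\rho-1)}{\pi}$, provided you add the one-line check that $c:=\frac{2|2\rho-1|}{\pi}\le\frac{2}{\pi}<e$, so that $x-c\log x\ge c(1-\log c)>0$ and the relevant factor indeed ``keeps a constant sign.'' One remark in your favour: your assignment of the interior maximum to $(0,\infty)$ at $x_+$ exactly when $\theta>\pi-\arcsin\alpha$, and to $(-\infty,0)$ at $x_-$ exactly when $\theta<\arcsin\alpha-(1-\alpha)\pi$, agrees with the paper's own equivalences \eqref{equivalence1}--\eqref{equivalence2}, whereas the printed subcases (2)(a) and (2)(c) of the theorem interchange $x_+$ and $x_-$ (in the range of (2)(a) the expression defining $x_+$ is not even real, since there $\cos\theta>0$ and $\sqrt{\alpha^2-\sin^2\theta}<\cos\theta$), so your version states the intended result correctly.
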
 
\begin{proof} (i)\,\, The case $\alpha\in(0,\alpha_0]$. Note that $0$ is a mode since the density diverges to $\infty$ at $x=0$. We can easily compute 
\begin{equation}\label{derivative boolean density}
\frac{\partial}{\partial x} B_\alpha(x,\theta) = -\frac{x^{\alpha-2}\sin\theta}{\pi}\cdot\frac{ (1+\alpha)(x^\alpha+\frac{1}{1+\alpha} \cos \theta)^2 +\frac{1}{1+\alpha}(\sin^2\theta -\alpha^2)}{\left(x^{2 \alpha}+2 x^\alpha \cos \theta+1\right)^2},\quad x>0. 
\end{equation}
Let 
$$
f(x):=(1+\alpha)\left(x^\alpha+\frac{1}{1+\alpha} \cos \theta\right)^2 +\frac{1}{1+\alpha}(\sin^2\theta -\alpha^2). 
$$
Note that $f(0)=1-\alpha>0$. Since $f(x)$ is a polynomial on $x^\alpha$ of degree 2, it is easy to see that if $\theta \in[0,\frac{\pi}{2}]$, then $\cos\theta\geq0$ and $f(x)$ does not have a zero in $(0,\infty)$. If $\theta \in (\frac{\pi}{2}, \alpha\pi)$, then $f(x)$ attains a local minimal value at $x=-\frac{1}{1+\alpha}\cos\theta$, but now $\sin^2\theta-\alpha^2 \geq \sin^2(\alpha\pi)-\alpha^2 \geq0$ for $\alpha \leq \alpha_0$. Hence $f(x)\geq0$ for $x>0$ and the map $x\mapsto B_\alpha(x,\theta)$ is strictly decreasing on $(0,\infty)$ for any $\theta\in(0,\alpha\pi]$. By the reflection property (see the last statement of Proposition \ref{boolean stable law}) the density is strictly increasing on $(-\infty,0)$ for any $\theta \in[0,\alpha\pi)$ and hence $\bst_{\alpha,\rho}$ is unimodal, the conclusion (\ref{bu1}). 

(ii)\,\, The case $\alpha\in(\alpha_0,1)$. In this case $0$ is still a mode of $\bst_{\alpha,\rho}$. 
From (\ref{derivative boolean density}), we have that 
\begin{equation}\label{equivalence1}
\begin{split}
\text{$x\mapsto B_\alpha(x,\theta)$ takes a local maximum in $(0,\infty)$} 
&\Leftrightarrow \text{$\theta\in\left(\frac{\pi}{2},\alpha\pi\right]$ and $\sin\theta < \alpha$}\\
&\Leftrightarrow \text{$\theta\in(\pi-\arcsin(\alpha),\alpha\pi]$},  
\end{split}
\end{equation}
and if this condition is satisfied, then the local maximum is attained at $x=x_+$. By reflection, it holds that 
\begin{equation}\label{equivalence2}
\begin{split}
\text{$x\mapsto B_\alpha(-x,\alpha\pi-\theta)$ takes a local maximum in $(-\infty,0)$} &\Leftrightarrow \text{$\theta\in[0,\arcsin(\alpha)-(1-\alpha)\pi)$}, 
\end{split}
\end{equation}
and if this condition is satisfied, the local maximum is attained at $x=x_-$. The two conditions (\ref{equivalence1}) and (\ref{equivalence2}) cannot be satisfied for the same $\theta$. Hence we have the conclusion (\ref{bu2}). 

(iii)\,\, The case $\alpha\in(1,2)$. Note that $0$ is not a mode of $\bst_{\alpha,\rho}$ since the density function takes $0$ at $x=0$. Since now $f(0)=1-\alpha<0$, we conclude that the map $x\mapsto B_\alpha(x,\theta)$ in $(0,\infty)$ takes a unique local maximum  at $x=x_+$ for any $\theta \in[(\alpha-1)\pi,\pi)$. By reflection and by Proposition \ref{boolean stable law}(\ref{b6})--(\ref{b8}), the conclusion follows. 

(iv)\,\, The case $\alpha=1$, $\rho\neq\frac{1}{2}$. Note that the density takes 0 at $x=0$. We have 
\begin{equation}\label{derivative boolean density 2}
\frac{\partial}{\partial x} B_1(x,\rho) =-\frac{4\rho\left(  x -\frac{2(2\rho-1)}{\pi}\right) \left(  x-\frac{2( 2 \rho-1)}{\pi} \log x\right)}{\pi x\left((x-\frac{2(2\rho-1)}{\pi}\log x)^2+4\rho^2\right)^2},\qquad x>0, 
\end{equation}
and then the remaining calculus is not difficult.  
\end{proof}
\begin{rem}
If $\alpha\in(0,1)$, the density $\bst_{\alpha,\rho}$ diverge at $0$. So, $x=0$ becomes always mode.
In addition, at $x=0$ the density cannot be differentiable. On the other hand, if $\alpha\in(\alpha_{0},1)$ and $\theta\in[0,\arcsin (\alpha)-(1-\alpha)\pi]$, at $x_{+}$ the density is differentiable.
\end{rem}

\subsection{Monotone case}
As before, the map $z\mapsto z^p$ denotes the principal value (defined in $\C\setminus (-\infty,0]$). We also use a different branch  
$$
z\mapsto z_{(0,2\pi)}^{p} = e^{ p \log |z| +ip \arg_{(0,2\pi)}z},   \qquad z\in\C\setminus [0,\infty), 
$$
where $ \arg_{(0,2\pi)}z$ is defined continuously so that $ \arg_{(0,2\pi)}z \in(0,2\pi)$. 

Let $\mst_{\alpha,\rho}$ be a monotone strictly stable law \cite{H10} characterized by 
\begin{defi}\label{definition monotone}
\begin{enumerate}[\rm(1)]
\item\label{} If $\alpha \in(0,1)$, then 
$$
F_{\mst_{\alpha,\rho}}(z)=(z^\alpha+e^{i\rho \alpha\pi})^{1/\alpha},  \qquad z\in \C^+,\qquad \rho \in [0,1].  
$$
\item If $\alpha =1$, then we only consider $\rho=\frac{1}{2}$: 
$$
F_{\mst_{1,1/2}}(z)=z+i, \qquad z\in \C^+.  
$$
\item If $\alpha\in(1,2]$, then 
$$
F_{\mst_{\alpha,\rho}}(z)=(z^\alpha+ e^{i[(\alpha-2)\rho+1]\pi})_{(0,2\pi)}^{1/\alpha}, \qquad z\in\C^+,\qquad \rho \in [0,1].
$$
\end{enumerate}
\end{defi}

\begin{rem} \begin{enumerate}[(a)]
\item If $\alpha\in(0,1)$, then $z^\alpha+e^{i\rho \alpha\pi}$ stays in $\C^+$ for $z\in\C^+$. However for $\alpha\in(1,2]$, $z^\alpha+ e^{i[(\alpha-2)\rho+1]\pi}$ may be in $\C^-$, and so we need to use the branch $(\cdot)_{(0,2\pi)}^{1/\alpha}$ to define $F_{\mst_{\alpha,\rho}}$ analytically (or continuously) in $\C^+$. 

\item The above definition does not respect the Bercovici-Pata bijection. If we hope to let $\mst_{\alpha,\rho}$ correspond to $\bst_{\alpha,\rho}$ regarding the monotone-Boolean Bercovici-Pata bijection, then we have to consider $\D_{\alpha^{1/\alpha}}(\mst_{\alpha,\rho})$ which is the induced measure of $\mst_{\alpha,\rho}$ by the map $x \mapsto \alpha^{1/\alpha}x$.  
\item All the above distributions are strictly stable. 
Non strictly stable distributions are not defined in the literature, and so we do not consider the non-symmetric case in $\alpha=1$. 
\end{enumerate}
\end{rem}
 
We will describe the probability measure $\mst_{\alpha,\rho}$. Let $\theta=\theta(\alpha,\rho)$ be (\ref{theta}) as before. For $\alpha\in(0,2], \theta \in(0,\pi)$,  let  
\[
M_{\alpha}(x,\theta)=\frac{\sin[\frac{1}{\alpha}\varphi(x^\alpha,\theta)]}{\pi(x^{2\alpha}+2x^\alpha \cos \theta +1)^{1/(2\alpha)}},\qquad x>0,
\]
where 
$$
\varphi(x, \theta)
=
\begin{cases}
\arctan(\frac{\sin\theta}{x+\cos\theta}),& x>-\cos\theta,\\
\frac{\pi}{2}, &x=-\cos \theta, \\
\arctan(\frac{\sin\theta}{x+\cos\theta})+\pi,& 0<x<-\cos\theta. 
\end{cases} 
$$
The second and the third cases do not appear if $\cos\theta \geq0$. We can also write $\varphi(x, \theta)=\arg(x + e^{i\theta})$. 

\begin{prop}\label{MD} The strictly monotone stable distributions are as follows. 
\begin{enumerate}[\rm(1)]
\item\label{m1} If $\alpha\in(0,1)$ and $\rho\in(0,1)$, then 
\[
\mst_{\alpha,\rho}(dx)=M_\alpha(x,\rho\alpha\pi)1_{(0,\infty)}(x)\,dx + M_\alpha(-x,(1-\rho)\alpha\pi)1_{(-\infty,0)}(x)\,dx. 
\]

\item \label{m2} If $\alpha \in(0,1)$ and $\rho=1$, then 
\[
\mst_{\alpha,1}(dx) = M_\alpha(x,\alpha\pi)1_{(0,\infty)}(x)\,dx. 
\]

\item\label{m3} If $\alpha=1$ and $\rho=\frac{1}{2}$, then 
$$
\mst_{1,1/2}(dx)=\frac{1}{\pi(x^2+1)}1_{\R}(x)\,dx.  
$$

\item\label{m4}  If $\alpha\in(1,2)$ and $\rho\in(0,1)$, then
\[
\mst_{\alpha,\rho}(dx)=M_\alpha(x,[\rho\alpha -2\rho+1]\pi)1_{(0,\infty)}(x)\,dx + M_\alpha(-x,[(1-\rho)\alpha+2\rho-1]\pi)1_{(-\infty,0)}(x)\,dx. 
\]

\item\label{m5} If $\alpha\in(1,2)$ and $\rho=1$, then    
\[
\mst_{\alpha,0}(dx) = 
M_\alpha(x,(\alpha-1)\pi)1_{(0,\infty)}(x)\,dx + \frac{\sin(\pi/\alpha)}{\pi(1-(-x)^\alpha)^{1/\alpha}}1_{(-1,0]}(x)\,dx.  
\]


\item\label{m6} If $\alpha=2$ and $\rho\in[0,1]$, then 
\[
\mst_{2,\rho}(dx)=\frac{1}{\pi\sqrt{1-x^2}}1_{(-1,1)}(x)\,dx. 
\]

\end{enumerate}
They are all absolutely continuous with respect to the Lebesgue measure. In the cases $(\alpha,\rho)\in(1,2)\times \{0,1\}$ and $\alpha=2$, the density function diverges to infinity at the edge of the support, but in the other cases the density function is either continuous on $\R$, or extends to a continuous function on $\R$ (if the support is not $\R$).  
The density function is real analytic except at the edge of the support and at 0. The replacement $\rho\mapsto 1-\rho$ gives the reflection of the measure around $x=0$. 

\end{prop}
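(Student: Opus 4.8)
The plan is to read the entire final paragraph as a statement about the boundary behaviour of $G:=G_{\mst_{\alpha,\rho}}=1/F_{\mst_{\alpha,\rho}}$, taking the explicit densities in items \ref{m1}--\ref{m6} as already established. The single general principle I would invoke throughout is the one recalled in the Analytic tools subsection: if $G$ extends continuously from $\C^+$ to $\C^+\cup I$ for an open interval $I$, then $\mst_{\alpha,\rho}$ restricted to $I$ is absolutely continuous there with continuous density $-\frac1\pi\im G(x+i0)$; and an atom can sit at $a$ only when $\lim_{y\searrow0}y\,G(a+iy)\neq0$.

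\textbf{Absolute continuity.} Let $E=\{x\in\R:F(x+i0)=0\}$. From the computations behind \ref{m1}--\ref{m6} one checks that $F$ extends continuously to $\C^+\cup\R$ and that $E$ is finite. In cases \ref{m1}, \ref{m2}, \ref{m4} (and the Cauchy case \ref{m3}, where $F(z)=z+i$) the boundary value $w(x+i0)=(x+i0)^\alpha+e^{i\theta}$ never vanishes: on $(0,\infty)$ because $\im w=\sin\theta>0$, and on $(-\infty,0)$ because $w(x+i0)=0$ would force simultaneously $(-x)^\alpha=1$ and $\theta=(\alpha-1)\pi$, which is excluded in these ranges; hence $E=\varnothing$. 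In case \ref{m5} the only zero is $F(-1+i0)=0$, so $E=\{-1\}$ (and $E=\{1\}$ for the reflected parameter), and in the arcsine case \ref{m6} one has $F(z)=(z^2-1)^{1/2}$ and $E=\{-1,1\}$. On $\R\setminus E$, $G=1/F$ is continuous, so $\mst_{\alpha,\rho}$ is absolutely continuous there with the stated density; since $E$ is finite, the only conceivable singular part is atomic and carried by $E$. At each $a\in E$ the map $w$ has a simple zero ($w'(a)=\alpha a^{\alpha-1}\neq0$, resp. $\pm2$ when $\alpha=2$), so $F(z)\sim C(z-a)^{1/\alpha}$ and $y\,G(a+iy)\sim C^{-1}(iy)^{-1/\alpha}y\to0$ because $1/\alpha<1$; thus there is no atom and every $\mst_{\alpha,\rho}$ is absolutely continuous. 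This is exactly where the monotone case departs from the Boolean one: in Proposition \ref{boolean stable law}\ref{b7} the zero of $F$ is simple and yields a genuine pole of $G$, hence an atom, whereas here the branch $(\cdot)_{(0,2\pi)}^{1/\alpha}$ turns the same zero into the integrable singularity $(z-a)^{-1/\alpha}$.

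\textbf{Edge behaviour and continuity at $0$.} Where $a\in E$ is an endpoint of the support, the same local expansion gives $-\frac1\pi\im G(x+i0)\sim \mathrm{const}\cdot|x-a|^{-1/\alpha}$ (and $|x\mp1|^{-1/2}$ when $\alpha=2$), which diverges; these are precisely the cases $(\alpha,\rho)\in(1,2)\times\{0,1\}$ and $\alpha=2$. In all remaining cases $E=\varnothing$, so $G$ is continuous on $\R$ and the density is continuous; when the support is a proper subinterval I would check that the density vanishes at the finite endpoint, e.g.\ $M_\alpha(0^+,\alpha\pi)=\sin(\pi)/\pi=0$ in \ref{m2}. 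I would also verify continuity at $0$, where two analytic branches are glued: using $\varphi(0,\theta)=\theta$ one has $M_\alpha(0^+,\theta_+)=\sin(\theta_+/\alpha)/\pi$ and $M_\alpha(0^+,\theta_-)=\sin(\theta_-/\alpha)/\pi$ with $\theta_\pm=\theta(\alpha,\rho),\theta(\alpha,1-\rho)$, and since $\theta_++\theta_-=\alpha\pi$ these agree, so the density is continuous (though in general only continuous, not analytic) at $0$.

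\textbf{Analyticity and reflection.} Real analyticity away from $0$ and the edges is immediate: on each component of $\R\setminus(E\cup\{0\})$ the boundary value $G(x+i0)$ is obtained by substituting real $x$ into a composition of the real-analytic maps $x\mapsto(-x)^\alpha e^{i\alpha\pi}$ (resp.\ $x\mapsto x^\alpha$), $w\mapsto w+e^{i\theta}$ and the fixed power/branch, with $F\neq0$; hence $G(x+i0)$ and its imaginary part are real-analytic there, while $0$ and the edges are excluded because $z\mapsto z^\alpha$ is not analytic at $0$ and $F$ vanishes at the edges. The reflection $\rho\mapsto1-\rho$ I would obtain exactly as in the proof of Proposition \ref{boolean stable law}: for $\rho\in(0,1)$ the densities in \ref{m1} and \ref{m4} are symmetric under the simultaneous swap $x\mapsto-x$, $\theta_+\leftrightarrow\theta_-$ (legitimate since $\theta(\alpha,\rho)+\theta(\alpha,1-\rho)=\alpha\pi$), and the boundary parameters follow by weak continuity in $\rho$. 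The main obstacle I expect is the bookkeeping for $\alpha\in(1,2)$: since $w(z)=z^\alpha+e^{i\theta}$ may enter $\C^-$ and cross the cut $[0,\infty)$ of $(\cdot)_{(0,2\pi)}^{1/\alpha}$, one must track $\arg_{(0,2\pi)}w(x+i0)$ carefully to confirm both that $E$ is a single interior point rather than an interval and that the edge singularity is the integrable power $|x-a|^{-1/\alpha}$ rather than a pole; everything else reduces to sign-checking and substitution.
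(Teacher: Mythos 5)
There is a genuine gap, and you name it yourself: the proposal ``tak[es] the explicit densities in items \ref{m1}--\ref{m6} as already established'' and defers the ``bookkeeping for $\alpha\in(1,2)$'' --- but those items, not the final paragraph, are the main content of Proposition \ref{MD}, and that bookkeeping is precisely the heart of the paper's proof. Concretely, for $\alpha\in(1,2)$ one must first check that $w(z)=z^\alpha+e^{i\theta}$ avoids the cut $[0,\infty)$ for all $z\in\C^+$ (which holds because $\arg z^\alpha\in(0,\alpha\pi)$ while points of the form $t-e^{i\theta}$, $t\ge 0$, have argument in $[\theta+\pi,2\pi)$ modulo $2\pi$, and $\theta\geq(\alpha-1)\pi$), so that $F=w_{(0,2\pi)}^{1/\alpha}$ is even well defined, analytic, and has continuous boundary values; and second, establish the branch identity
\[
\left(e^{i\alpha\pi}\bigl((-x)^\alpha+e^{i(\theta-\alpha\pi)}\bigr)\right)_{(0,2\pi)}^{1/\alpha}
=-\left((-x)^\alpha+e^{i(\theta-\alpha\pi)}\right)^{1/\alpha},\qquad x<0,
\]
with the \emph{principal} branch on the right (valid since $\theta-\alpha\pi\in(-\pi,0)$ puts the second factor in $\C^-$, so the total argument lies in $((\alpha-1)\pi,\alpha\pi)\subset(0,2\pi)$). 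This identity is what produces $M_\alpha(-x,\alpha\pi-\theta)$ on the negative axis in item \ref{m4}, and its boundary case gives $F(x+i0)=e^{i\frac{\alpha-1}{\alpha}\pi}(1-(-x)^\alpha)^{1/\alpha}$ on $(-1,0)$, whence the second density in item \ref{m5}. Your later steps are circular without it: the finiteness of $E$, the identification $E=\{-1\}$ (resp.\ $\{\pm1\}$), and the claim that $F$ extends continuously to $\C^+\cup\R$ are all consequences of exactly the computation you postpone.

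That said, everything you do argue is correct, and in one place cleaner than the paper: the paper rules out an atom at $-1$ by computing $\lim_{z\to-1,\,z\in\C^+}(z+1)G(z)=0$ directly, whereas you obtain the absence of atoms and the edge divergence $|x-a|^{-1/\alpha}$ uniformly from the simple zero of $w$ at $a\in E$, giving $F(z)\sim C(z-a)^{1/\alpha}$ with $1/\alpha<1$; your structural contrast with Proposition \ref{boolean stable law}(\ref{b7}) --- where the simple zero of $F$ itself yields a pole of $G$ and hence an atom, while here the $1/\alpha$-th power turns it into an integrable singularity --- is exactly the right explanation for why the monotone laws have no singular part. Your continuity-at-$0$ check via $\varphi(0^+,\theta)=\theta$ and $\theta_++\theta_-=\alpha\pi$, and the reflection property via symmetry of the formulas plus weak continuity in $\rho$, coincide with steps (vi) and (v) of the paper's proof. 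So the proposal is a sound completion-and-supplement scheme, but as a proof of Proposition \ref{MD} it is incomplete until the deferred boundary-value computations for $\alpha\in(1,2)$ (and the $\rho=1$ cases) are actually carried out.
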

\begin{proof} Let $G(z),F(z)$ denote $G_{\mst_{\alpha,\rho}}(z), F_{\mst_{\alpha,\rho}}(z)$ respectively. First note that $\mst_{\alpha,\rho}$ does not have an atom at $x=0$ since $\lim_{y\searrow0}F(iy)\neq0$. 

We will not consider the cases $\alpha=1$ and $\alpha=2$ since these are well known. 

(i)\,\, The case $\alpha\in(0,1)$, $\rho \in (0,1)$. For $x>0$, we have that 
\begin{equation}\label{eq010}
\lim_{y\searrow 0} (x+i y)^\alpha +e^{i\theta} = x^\alpha +\cos\theta +i\sin\theta \in \C^+,  
\end{equation}
which equals $r(x^\alpha,\theta)e^{i\varphi(x^\alpha,\theta)}$, where 
$$
r(x,\theta)= \sqrt{x^2 +2x \cos\theta+ 1}. 
$$ 
So we get 
\begin{equation}\label{monotone density a}
\begin{split}
-\frac{1}{\pi}\lim_{y\searrow0}\im\,G(x+iy) 
&= -\frac{1}{\pi}\im\,\frac{1}{(x^\alpha +e^{i\theta})^{1/\alpha}} = -\frac{1}{\pi}\im\, r(x^\alpha,\theta)^{-1/\alpha}e^{-i\varphi(x^\alpha,\theta)/\alpha}\\
&= \frac{1}{\pi}r(x^\alpha,\theta)^{-1/\alpha} \sin\left(\frac{1}{\alpha}\varphi(x^\alpha,\theta)\right), \quad x>0, 
\end{split}
\end{equation}
which is strictly positive since now $\theta \in(0,\pi)$. This implies that $\mst_{\alpha,\rho}$ is absolutely continuous on $(0,\infty)$ with respect to the Lebesgue measure, and the density function is given by $M_{\alpha}(x,\theta)$. 

Note that, for $x<0$, it holds that 
\begin{equation}\label{eq10}
\lim_{y\searrow 0} (x+i y)^\alpha +e^{i\theta} = (-x)^\alpha e^{i\alpha\pi} +e^{i\theta} = e^{i\alpha\pi}((-x)^\alpha + e^{i(\theta-\alpha\pi)}). 
\end{equation}
Now $\theta -\alpha\pi \in(-\pi,0).$ From geometric consideration, the following can be justified: 
\begin{equation}
\begin{split}
\left( e^{i\alpha\pi}((-x)^\alpha + e^{i(\theta-\alpha\pi)})\right)^{1/\alpha} 
&=  \left( e^{i\alpha\pi}\right)^{1/\alpha}\left((-x)^\alpha + e^{i(\theta-\alpha\pi)}\right)^{1/\alpha} \\
&=  -\left((-x)^\alpha + e^{i(\theta-\alpha\pi)})\right)^{1/\alpha},\qquad x<0. 
\end{split}
\end{equation}
We can show that 
\begin{equation}\label{formula1} 
(-x)^\alpha + e^{i(\theta-\alpha\pi)} =r((-x)^\alpha, \alpha\pi-\theta)e^{-i\varphi((-x)^\alpha,\alpha\pi-\theta)}, 
\end{equation}
and so we get (\ref{m1}) from a computation similar to (\ref{monotone density a}). 

(ii)\,\, The case $\alpha\in(0,1)$, $\rho=1$. The formula (\ref{monotone density a}) still holds for $x>0$. For $x<0$, we have 
$$
F(x+i0)=((x+i0)^\alpha+e^{i\alpha\pi})^{1/\alpha} = ((-x)^\alpha e^{i\alpha\pi}+e^{i\alpha\pi})^{1/\alpha} =-((-x)^\alpha +1)^{1/\alpha} <0,  
$$
and hence $\mst_{\alpha,1}$ does not have support on $(-\infty,0)$. So we get (\ref{m2}).  

(iii)\,\, The case $\alpha\in(1,2)$, $\rho \in (0,1)$. The formula (\ref{monotone density a}) still holds for $x>0$. The computation of the density function for $x<0$ is now delicate. 
For $x<0$, the formula (\ref{eq10}) still holds true. Note that now again $\theta -\alpha\pi \in(-\pi,0)$. By geometric consideration, 
the formula 
\begin{equation}
\begin{split}
\left( e^{i\alpha\pi}((-x)^\alpha + e^{i(\theta-\alpha\pi)})\right)_{(0,2\pi)}^{1/\alpha} 
&=  \left( e^{i\alpha\pi}\right)^{1/\alpha}\left((-x)^\alpha + e^{i(\theta-\alpha\pi)}\right)^{1/\alpha} \\
&=  -\left((-x)^\alpha + e^{i(\theta-\alpha\pi)})\right)^{1/\alpha},\qquad x<0, 
\end{split}
\end{equation}
is valid. A delicate point is that we should use the principal value in the last expression, not the branch $(\cdot)_{(0,2\pi)}^{1/\alpha}$.  Then the formula (\ref{formula1}) still holds and then the Stieltjes inversion formula implies (\ref{m4}). 

(iv)\,\, The case $\alpha\in(1,2)$, $\rho=1$. For $x>0$, the computation (\ref{monotone density a}) holds without any change. For $x<-1$, we have 
\begin{equation}\label{eq11}
\begin{split}
F(x+i0) &= ((x+i0)^\alpha +e^{i(\alpha-1)\pi} )^{1/\alpha}= ((-x)^\alpha e^{i\alpha\pi} -e^{i\alpha\pi})^{1/\alpha} \\
&= ((-x)^\alpha e^{i\alpha\pi} -e^{i\alpha\pi})^{1/\alpha} = -((-x)^\alpha -1)^{1/\alpha}<0,  
\end{split}
\end{equation}
and so $\mst_{\alpha,1}$ does not have support on $(-\infty,-1)$. We can also show that $\lim_{z\to-1, z\in\C^+}(z+1)G(z+1)=0$ which implies that  there is no atom at $-1$. For $x\in(-1,0)$, by using part of (\ref{eq11}), we have 
$$
F(x+i0) = \left(e^{i(\alpha-1)\pi}(1-(-x)^\alpha)\right)^{1/\alpha} = e^{i\frac{\alpha-1}{\alpha}\pi} (1-(-x)^\alpha)^{1/\alpha}, 
$$ 
and then by the Stieltjes inversion formula we get (\ref{m5}). 

The remaining statements can be proved as follows. 

(v)\,\, The reflection property of $\rho\mapsto 1-\rho$. For $\rho\in(0,1)$, we can prove the reflection property from (\ref{m2}) and (\ref{m4}). For $\rho=1$, we can take $\rho_n <1$ converging to $1$ and then the reflection property is justified since $\mst_{\alpha,\rho}$ is weakly continuous with respect to $(\alpha,\rho)\in((0,1)\cup (1,2])\times [0,1]$.  

(vi)\,\, The continuity of the density function at $x=0$. For $\alpha\in(0,1)\cup(1,2)$ and $\theta\in(0,\pi)$, we can show that 
$\lim_{x\searrow0}\varphi(x,\theta)=\theta$ and hence 
$$
\lim_{x\searrow0}M_\alpha(x,\theta)= \frac{\sin(\theta/\alpha)}{\pi}=\frac{\sin((\alpha\pi-\theta)/\alpha)}{\pi}=\lim_{x\searrow0}M_\alpha(x,\alpha\pi-\theta). 
$$ 
This shows the continuity.

The remaining statements are easy consequences of (\ref{m1})--(\ref{m6}). 
\end{proof}


\begin{thm} Let
 \begin{align}
&v_+:= \left(\frac{\sin(\theta-\frac{\alpha\pi}{1+\alpha})}{\sin(\frac{\alpha\pi}{1+\alpha})}\right)^{1/\alpha}, \qquad v_-:=-\left(\frac{\sin(\frac{\alpha^2\pi}{1+\alpha}-\theta)}{\sin(\frac{\alpha\pi}{1+\alpha})}\right)^{1/\alpha}. 
\end{align}
\begin{enumerate}[\rm(1)]
\item\label{mu1} If $\alpha\in(0,1)$, then $\mst_{\alpha,\rho}$ is unimodal. The mode is described as follows. 
 \begin{enumerate}[\rm (a)] 
\item\label{mu1a} If $\theta\in[0,\frac{\alpha^2\pi}{1+\alpha}]$, then the mode is $v_-$. 
 \item\label{mu1b} If $\theta\in [\frac{\alpha^2\pi}{1+\alpha}, \frac{\alpha\pi}{1+\alpha}]$, then the mode is $0$. 
\item\label{mu1c}  If $\theta\in[\frac{\alpha\pi}{1+\alpha},\alpha\pi]$, then the mode is $v_+$. 
\end{enumerate}

\item\label{mu2} If $\alpha=1$ and $\rho=\frac{1}{2}$, then $\mst_{1,1/2}$ is unimodal with mode 0. 

\item\label{mu3} If $\alpha \in(1,\frac{1+\sqrt{5}}{2}]$, then there are sub cases. 
\begin{enumerate}[\rm (a)] 
\item\label{mu3a} If $\theta\in[(\alpha-1)\pi,\frac{\alpha\pi}{1+\alpha}]$, then $\mst_{\alpha,\rho}$ is unimodal with mode $v_-$. 
\item\label{mu3c}  If $\theta\in(\frac{\alpha\pi}{1+\alpha},\frac{\alpha^2\pi}{1+\alpha})$, then $\mst_{\alpha,\rho}$ is bimodal with modes $v_-$ and $v_+$. 
 \item\label{mu3b} If $\theta\in [\frac{\alpha^2\pi}{1+\alpha}, \pi]$, then  $\mst_{\alpha,\rho}$ is unimodal with mode $v_+$. 

\end{enumerate}
\item\label{mu4} If $\alpha \in(\frac{1+\sqrt{5}}{2}, 2]$, then $\mst_{\alpha,\rho}$ is bimodal with modes $v_-$ and $v_+$. 
\end{enumerate}
Note that $\frac{1+\sqrt{5}}{2}=1.6180\ldots.$ 
\end{thm}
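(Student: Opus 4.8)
The plan is to reduce everything to the sign of a single derivative on the half-line $(0,\infty)$ and then transport the conclusions to $(-\infty,0)$ by the reflection property of Proposition~\ref{MD}. Writing the positive-axis density as $M_\alpha(x,\theta)=-\frac{1}{\pi}\im\,(x^\alpha+e^{i\theta})^{-1/\alpha}$ and differentiating (the chain rule applies with the principal branch since $x^\alpha+e^{i\theta}$ stays in $\C^+$ for $\theta\in(0,\pi)$), I obtain
\[
\frac{\partial}{\partial x}M_\alpha(x,\theta) = -\frac{x^{\alpha-1}}{\pi}\,r(x^\alpha,\theta)^{-(1+\alpha)/\alpha}\,\sin\!\left(\frac{(1+\alpha)\,\varphi(x^\alpha,\theta)}{\alpha}\right),\qquad x>0.
\]
Since $x^{\alpha-1}$ and $r$ are positive, the sign of the derivative is opposite to that of $\sin\big(\tfrac{1+\alpha}{\alpha}\varphi(x^\alpha,\theta)\big)$. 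The angle $\varphi(x^\alpha,\theta)=\arg(x^\alpha+e^{i\theta})$ decreases strictly from $\theta$ (at $x=0^+$) to $0$ (as $x\to\infty$), so a critical point occurs exactly when $\tfrac{1+\alpha}{\alpha}\varphi=k\pi$. Because $\varphi$ ranges only over $(0,\theta)\subset(0,\pi)$ and $\tfrac{2\alpha\pi}{1+\alpha}>\theta$ in every relevant range, only $k=1$ can occur; it does so iff $\theta>\frac{\alpha\pi}{1+\alpha}$, and solving $\varphi(x^\alpha,\theta)=\frac{\alpha\pi}{1+\alpha}$ by matching real and imaginary parts gives precisely $x=v_+$. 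As $\varphi$ decreases through $\frac{\alpha\pi}{1+\alpha}$ the derivative passes from positive to negative, so $v_+$ is a local maximum. Replacing $\theta$ by $\alpha\pi-\theta$ shows a local maximum $v_-$ exists on $(-\infty,0)$ iff $\theta<\frac{\alpha^2\pi}{1+\alpha}$.

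Next I would organize the case analysis around the two thresholds $\frac{\alpha\pi}{1+\alpha}$ (existence of $v_+$) and $\frac{\alpha^2\pi}{1+\alpha}$ (existence of $v_-$). For $\alpha\in(0,1)$ one has $\frac{\alpha^2\pi}{1+\alpha}<\frac{\alpha\pi}{1+\alpha}$, so the conditions $\theta<\frac{\alpha^2\pi}{1+\alpha}$ and $\theta>\frac{\alpha\pi}{1+\alpha}$ are mutually exclusive: at most one off-centre peak exists, forcing unimodality, and the three regimes of (\ref{mu1}) read off directly ($v_-$, then mode $0$, then $v_+$). For $\alpha\in(1,2]$ the ordering reverses, opening an overlap window $(\frac{\alpha\pi}{1+\alpha},\frac{\alpha^2\pi}{1+\alpha})$ in which both $v_-$ and $v_+$ survive, hence bimodality. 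Whether this window is reached depends on comparing the admissible endpoints $\theta=(\alpha-1)\pi$ and $\theta=\pi$ with the thresholds; both comparisons reduce to the sign of $\alpha^2-\alpha-1$, whose positive root is the golden ratio $\frac{1+\sqrt5}{2}$. For $\alpha>\frac{1+\sqrt5}{2}$ one gets $(\alpha-1)\pi>\frac{\alpha\pi}{1+\alpha}$ and $\pi<\frac{\alpha^2\pi}{1+\alpha}$, so $v_\pm$ both exist for every admissible $\theta$, which is case (\ref{mu4}); for $\alpha\in(1,\frac{1+\sqrt5}{2}]$ the window is a proper subinterval, giving the three subcases of (\ref{mu3}).

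To conclude I must decide, in each regime, whether $x=0$ is a maximum, a minimum, or neither, since the count of off-centre maxima alone does not pin down the global shape. This I would do from the one-sided derivatives at $0$: the right derivative has the sign of $-\sin\big(\tfrac{(1+\alpha)\theta}{\alpha}\big)$ and the left derivative (via reflection) the sign of $+\sin\big(\tfrac{(1+\alpha)(\alpha\pi-\theta)}{\alpha}\big)$, while the continuity of the density at $0$ with value $\frac{\sin(\theta/\alpha)}{\pi}$ (Proposition~\ref{MD}) lets these signs settle the local behaviour. Checking them shows that $0$ is a strict maximum exactly in the middle regime for $\alpha<1$ and a strict minimum throughout the bimodal windows for $\alpha>1$, which is what separates unimodality from bimodality. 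The residual cases are routine: $\alpha=1,\rho=\tfrac12$ is the Cauchy law, $\alpha=2$ is the arcsine law with its two endpoint singularities $v_\pm=\pm1$, and the boundary values $\rho\in\{0,1\}$ for $\alpha\in(1,2)$ follow from the explicit bounded-support density in Proposition~\ref{MD}(\ref{m5}), where one computes that $v_\pm$ lands exactly on the divergence point. The main obstacle I anticipate is the bookkeeping at $x=0$ and at the interval endpoints: beyond the sign checks, one must verify that the peaks $v_\pm$ merge continuously with $0$ (or with the support edge) as $\theta$ crosses a threshold, so that the half-open versus closed intervals in the statement come out exactly right.
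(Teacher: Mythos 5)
Your proposal is correct and follows essentially the same route as the paper: the same closed-form derivative $\frac{\partial}{\partial x}M_\alpha(x,\theta)=-\frac{x^{\alpha-1}}{\pi q(x^\alpha,\theta)^{(\alpha+1)/(2\alpha)}}\sin\bigl(\frac{\alpha+1}{\alpha}\varphi(x^\alpha,\theta)\bigr)$, the strict monotonicity of $\varphi$ giving the two thresholds $\frac{\alpha\pi}{1+\alpha}$ and $\frac{\alpha^2\pi}{1+\alpha}$ (hence $v_\pm$), the golden-ratio comparison via $\alpha^2-\alpha-1$, and the boundary cases $\rho\in\{0,1\}$ handled through Proposition~\ref{MD}(\ref{m5}). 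Your explicit one-sided derivative sign checks at $x=0$ simply make precise what the paper dispatches as ``geometric consideration,'' so no gap remains.
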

\begin{proof} We assume that $\alpha\neq 1,2$. 

(0) (Arguments valid for $\alpha\in(0,1)\cup(1,2),\rho\in(0,1)$)\,\, Let $p(x)$ be the density function of $\mst_{\alpha,\rho}$ and let $q(x,\theta):= x^2+2(\cos\theta)x +1$. The assumption $\rho\in(0,1)$ implies $\theta \in(0,\alpha\pi)$. Then $\mst_{\alpha,\rho}$ has the support $\R$ and $p(x)>0$ in $\R$. 
We can prove that  
\begin{equation}\label{derivative m}
p'(x)=\frac{\partial}{\partial x}M_\alpha(x,\theta) = -\frac{x^{\alpha-1}}{\pi q(x^\alpha,\theta)^\frac{\alpha+1}{2\alpha}}\sin\left(\frac{\alpha+1}{\alpha}\varphi(x^\alpha,\theta)\right),\qquad x>0.   
\end{equation}
 Since $x\mapsto\varphi(x,\theta)$ is strictly decreasing, mapping $(0,\infty)$ onto $(0,\theta)$, the following equivalence holds true: 
 
\begin{equation}\label{eq12}
\text{$p'(x)$ changes the sign at a point $x \in(0,\infty)$  $\Leftrightarrow\theta> \frac{\alpha\pi}{1+\alpha}$.} 
\end{equation} 
Moreover,  $\frac{\alpha+1}{\alpha}\theta\leq(\alpha+1)\pi<2\pi$ for $\alpha\in(0,1)$, and also $\frac{\alpha+1}{\alpha}\theta\leq(1/\alpha+1)\pi<2\pi$ for $\alpha\in(1,2)$, and so the sign of $p'(x)$ changes at most once in $(0,\infty)$. If the sign changes, the critical point is given by $x=v_+$. 

For the density function on the negative line, it suffices to study $\frac{\partial}{\partial x}M_\alpha(x,\alpha\pi-\theta)$, $x>0$, and it follows from (\ref{eq12}) that 
\begin{equation}\label{eq13}
\text{$p'(x)$ changes the sign at a point $x \in(-\infty,0)$  $\Leftrightarrow\theta< \frac{\alpha^2\pi}{1+\alpha}$.} 
\end{equation} 
The sign changes at most once, and if it changes, the critical point is $x=-v_-$. 

(i)\,\, The case $\alpha \in(0,1)$. The conditions (\ref{eq12}) and (\ref{eq13}) cannot be satisfied at the same time. Note that $p$ may have a mode at $0$. Combining these arguments and the geometric consideration of the graph of $p$, the claim (\ref{mu1}) follows for $\rho\in(0,1)$. The formula (\ref{derivative m}) holds also for $\theta=\alpha\pi$ and so the case $\rho=1$ is covered. The case $\rho=0$ is the reflection of $\rho=1$. 

(ii)\,\, The case $\alpha\in(1,2)$. If $\rho\in(0,1)$, then by looking at the formula (\ref{derivative m}) and the fact $\lim_{x\searrow0} \varphi(x,\theta)=\theta$, we have that $p'(+0)=0$, and from the replacement $\theta \mapsto \alpha\pi-\theta$ we have $p'(-0)=0$, and hence $p'(0)=0$. It is also true that $p'(0)=0$ for $\rho=0,1$. 

(ii-1)\,\, The case $\alpha \in(1,\frac{1+\sqrt{5}}{2}]$. Note that $(\frac{\alpha\pi}{1+\alpha},\frac{\alpha^2\pi}{1+\alpha}) \subset[(\alpha-1)\pi,\pi]$.   If $\theta \in (\frac{\alpha\pi}{1+\alpha},\frac{\alpha^2\pi}{1+\alpha})$, then $p$ has two modes at $x=v_-,v_+$ from (\ref{eq12}), (\ref{eq13}). The density function $p$ takes a local minimum at $x=0$, and hence we get (\ref{mu3c}). If $\theta\in((\alpha-1)\pi,\frac{\alpha\pi}{1+\alpha}]$, then $p'(x)=0$ only for $x=0,v_-$, and from geometric consideration of the graph of $p$, the sign of $p'$ does not change at $x=0$. For $\theta= (\alpha-1)\pi$ (or equivalently $\rho=1$), by using the formula (\ref{derivative m}) for $x>0$ and Proposition \ref{MD}(\ref{m5}) for $x\in(-1,0)$, we can show that $p$ is strictly decreasing in $(-1,\infty)$. Hence $\mst_{\alpha,1}$ is unimodal with mode $-1=v_-$. Thus we have (\ref{mu3a}). (\ref{mu3b}) is obtained by reflection. 

(ii-2)\,\, The case $\alpha \in(\frac{1+\sqrt{5}}{2}, 2)$. Note that $[(\alpha-1)\pi,\pi]\subset (\frac{\alpha\pi}{1+\alpha},\frac{\alpha^2\pi}{1+\alpha})$. So, if $\rho\in(0,1)$, $p'$ changes its sign at $x=v_+, v_-$, and also at $x=0$. If $\rho=1$, then $p'$ changes its sign at $x=v_+$ and 
$p$ is strictly decreasing in $(-1,0)$. From a geometric consideration, we find that $p$ takes a local minimum at 0. The case $\rho=0$ follows by refection. Hence we proved (\ref{mu4}). 
\end{proof}

\newpage

\begin{figure}[htbp]
\begin{center}
  \includegraphics[width=100mm]{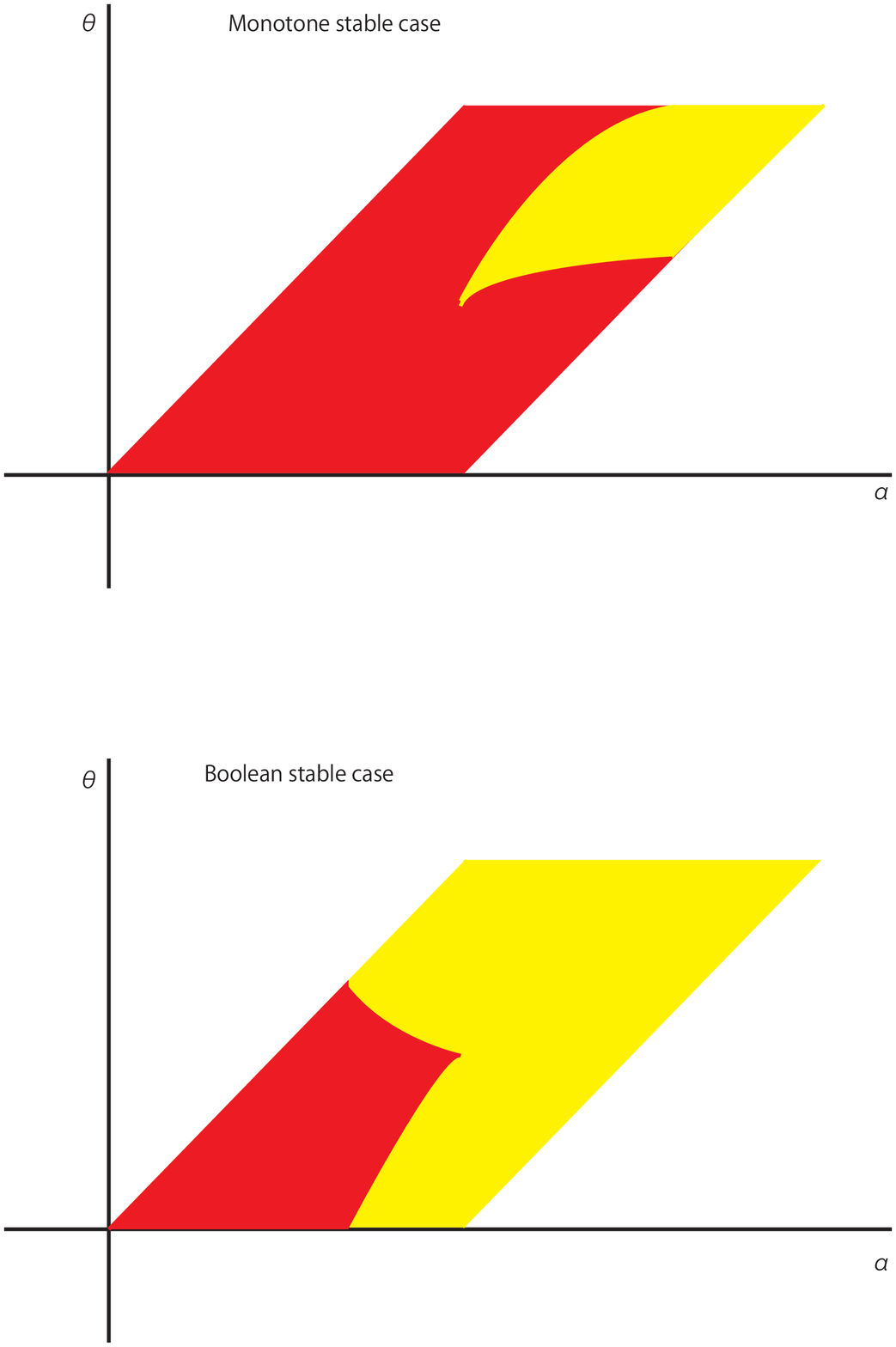}
\end{center}
\end{figure}

red: unimodal, yellow: bimodal

\end{document}